\newtheorem{theorem}{Theorem}[section]
\newtheorem{lemma}[theorem]{Lemma}
\theoremstyle{definition}
\theoremstyle{remark}
\numberwithin{equation}{section}
\newcommand{\NN}{\mathbb{N}}
\newcommand{\CC}{\mathbb {C}}
\newcommand{\R}{\mathbb{R}}
\newcommand{\D}{\mathbb{D}}
\begin{document}
\setcounter{page}{1}
\title[Differential operator on generalized Fock spaces ]{ A note on the differential operator on generalized Fock spaces }
\author [Tesfa  Mengestie]{Tesfa  Mengestie }
\address{Department of Mathematical Sciences \\
Western Norway University of Applied Sciences\\
Klingenbergvegen 8, N-5414 Stord, Norway}
\email{Tesfa.Mengestie@hvl.no}
\thanks{The author  is partially supported by HSH grant 1244/ H15.}
\subjclass[2010]{Primary 47B32, 30H20, 46E20; Secondary 46E22,47B33 }
 \keywords{Generalized Fock spaces, Fock--Sobolov spaces, Bounded,  Differential operator,  Schatten class, Spectrum}
\begin{abstract}
It has long been known that the differential operator $D$  represents a typical example of unbounded operators on many  Banach spaces including
the classical  Fock spaces, the Fock--Sobolev spaces, and  the generalized Fock spaces where  the weight  decays faster than the  Gaussian weight. In this note we identify  Fock  type  spaces where   the operator  admits boundedness, compactness and membership in the   Schatten $\mathcal{S}_p$ class spectral structures. We also showed that its nontrivial spectrum  while  acting on such spaces is  precisely  the closed unit disk  $\overline{\D}$ in the complex plane.

\end{abstract}

\maketitle

\section{Introduction} \label{1}
 Various  order   differential operators play fundamental  rolls   in many part of mathematics including in the study of differential equations. Nevertheless, the  operator   $Df= f'$  often appears  as  a canonical example of unbounded operators  in many Banach spaces including the very  classical Hilbert space $L^2(\R)$,  the space of continuous functions $ C([a, b])$ with the supremum norm,  and the likes. Its unboundedness on    Fock spaces with  the classical Gaussian weight $ e^{-|z|^2}$ and on generalized Fock  spaces where the weight decays faster than the Gaussian weight  was recently verified in \cite{TM3}. The same  conclusion was also  drawn in \cite{TM5} on the Fock--Sobolev spaces which are typical  examples of  generalized  Fock spaces with  weight decaying  slower than the Gaussian weight. A natural question  to consider is   whether there  could exist spaces of Fock type where this operator admits   richer operator-theoretic properties. Said differently, we would like to know how the function-theoretic properties of  the weight functions generating the spaces  are related to the operator-theoretic properties of $D$. The central  aim of this note  is to investigate  this  and identify  Fock type spaces where the operator $ D$ admits some basic spectral properties.

In view of the  above discussion, if there could  exist generalized  Fock  spaces  on which the operator $D$ acts in a
    bounded fashion, then the associated weight must decay slower than the $k^{th}$ order  Fock--Sobolev spaces with  weight $e^{-\frac{|z|^2}{2}+k\log(1+|z|)},$ where $k$ is a nonnegative integer; see \cite{RCKZ, TM4, TM5} for further information on these spaces. Keeping this in mind, we consider the following setting.

      Let   $m>0$,  $0<p<\infty$,  and $\mathcal{F}_{(m,p)}$ be  a class of generalized  Fock   spaces  consisting of all entire functions $f$ for which
\begin{align*}
\|f\|_{(m, p)}^p= \int_{\CC} |f(z)|^p e^{-p|z|^m} dA(z) <\infty,
\end{align*} where   $dA$ denotes the
usual Lebesgue area  measure on $\CC$.   With this, we plan to find conditions on $m$ (equivalently on the growth of $\psi_m(z)= |z|^m$)  under which $D$ admits boundedness, compactness, and other operator-theoretic  structures while acting between  the spaces  $\mathcal{F}_{(m, p)}.$ It turns out that such structures  do happen to exist   only if the inducing weight function $\psi_m$ grows at a rate much slower than  the corresponding weight function in the classical Gaussian case $\psi_2(z)= |z|^2$.  We precise this in our first main result to follow.
\begin{theorem}\label{thm1}
 \begin{enumerate}
\item Let $0<p\leq q<\infty$ and $m>0$. Then  $D: \mathcal{F}_{(m,p)} \to \mathcal{F}_{(m,q)}$ is
\begin{enumerate}
\item bounded if and only if \begin{align*}m\leq   2-\frac{pq}{pq+q-p}.\end{align*}
\item compact if and only if \begin{align*}m< 2-\frac{pq}{pq+ q-p}.\end{align*}
\end{enumerate}
\item Let $0<q<p<\infty$ and $m>0$. Then the following statements are equivalent.
\begin{enumerate}
\item $D: \mathcal{F}_{(m,p)} \to \mathcal{F}_{(m,q)}$ is bounded;
\item $D: \mathcal{F}_{(m,p)} \to \mathcal{F}_{(m,q)}$ is compact;
\item It holds that
 \begin{align*}
 m<  1-2  \Big(\frac{1}{q}-\frac{1}{p}\Big).
\end{align*}
\end{enumerate}
\end{enumerate}
\end{theorem}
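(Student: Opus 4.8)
The plan is to convert the action of $D$ into a Fock--Carleson embedding and then read off both regimes from a single ``symbol.'' Throughout write $\psi_m(z)=|z|^m$ and attach to the weight its two natural scales: the concentration scale $\tau(z)=(1+|z|)^{1-m/2}$, on which the subharmonicity of $|f|^p$ yields the sharp point evaluation $|f(z)|e^{-\psi_m(z)}\lesssim(1+|z|)^{-(2-m)/p}\|f\|_{(m,p)}$, and the flatness scale $\rho(z)=(1+|z|)^{1-m}$, on which $e^{-\psi_m}$ is comparable to its centre value. Applying Cauchy's formula on the circle $|w-z|=\rho(z)$ costs the factor $\rho(z)^{-1}=(1+|z|)^{m-1}$ --- precisely the size of the phase gradient $|\partial\psi_m|$ --- and produces the companion estimate $|f'(z)|e^{-\psi_m(z)}\lesssim(1+|z|)^{(m-1)-(2-m)/p}\|f\|_{(m,p)}$. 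I would also fix the normalized reproducing-kernel-type test functions $k_z$, unit vectors of $\mathcal F_{(m,p)}$ concentrated on $B(z,\tau(z))$ that saturate both displays and satisfy $|k_z'|\asymp(1+|z|)^{m-1}|k_z|$ on their bump.

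These estimates reduce $D$ to an embedding. Combining the derivative estimate with the subharmonic maximal function and Fubini gives the upper bound $\|Df\|_{(m,q)}^q\lesssim\int_{\CC}|f|^q\,d\mu$, where $d\mu(z)=(1+|z|)^{q(m-1)}e^{-q\psi_m(z)}\,dA(z)$, while testing against the $k_z$ gives the matching lower bound. Hence $D:\mathcal F_{(m,p)}\to\mathcal F_{(m,q)}$ is bounded (resp.\ compact) if and only if $\mu$ is a (resp.\ vanishing) $(p,q)$-Fock--Carleson measure, and the whole problem is controlled by the single symbol
\[ \sigma(z):=\|Dk_z\|_{(m,q)}\asymp(1+|z|)^{(m-1)+(2-m)\left(\frac1q-\frac1p\right)}, \]
obtained by integrating the bump of $k_z'$ over its support of area $\asymp\tau(z)^2=(1+|z|)^{2-m}$.

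It remains to invoke the two faces of the Carleson characterization. When $p\le q$ the embedding is bounded exactly when $\sigma\in L^\infty$, i.e.\ when the exponent above is $\le0$; a short rearrangement turns this into $m\le 2-\tfrac{pq}{pq+q-p}$, and compactness corresponds to $\sigma(z)\to0$, hence to the strict inequality --- this is part (1). When $q<p$, single kernels no longer decide the matter: here the Luecking--Khinchine method (randomizing $f=\sum_j c_jk_{z_j}$ over a $\tau$-lattice and integrating in the signs) shows that boundedness is equivalent to $\sigma\in L^{pq/(p-q)}\!\left(\tau^{-2}dA\right)$, i.e.\ to $\int_{\CC}\sigma(z)^{\frac{pq}{p-q}}\tau(z)^{-2}\,dA(z)<\infty$; evaluating this in polar coordinates against the lattice density $\tau^{-2}\asymp(1+|z|)^{m-2}$ converts convergence into the strict inequality $m<1-2\left(\tfrac1q-\tfrac1p\right)$. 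Because this is a convergence rather than a boundedness condition, its tails simultaneously control the approximation, so boundedness, compactness and the inequality are equivalent --- this is part (2).

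The main obstacle, and where I would concentrate, is establishing the two pointwise estimates with their \emph{sharp} polynomial factors: since $e^{-\psi_m}$ is genuinely non-flat on the concentration scale $\tau$, neither scale alone gives the correct exponent, and one must interlace the subharmonicity argument on $\tau(z)$ (for the value) with the Cauchy/flatness argument on $\rho(z)$ (for the derivative), verifying that one differentiation costs exactly the phase-gradient factor $(1+|z|)^{m-1}$. Secondary difficulties are the construction and norm control of the $k_z$ for arbitrary $0<p<\infty$, where no explicit kernel is available (handled by the standard $\bar\partial$/atomic-decomposition apparatus for these spaces), and the transfer of the Luecking $q<p$ characterization to the present weighted setting; once these inputs are in place, both parts follow from the symbol computation above.
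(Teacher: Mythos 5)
Your proposal is sound and, once the exponents are unwound, lands exactly on the paper's conditions: the requirement that your symbol exponent $(m-1)+(2-m)\bigl(\tfrac{1}{q}-\tfrac{1}{p}\bigr)$ be nonpositive rearranges to $m\le 2-\tfrac{pq}{pq+q-p}$, and the integrability requirement $\int_{\CC}\sigma(z)^{\frac{pq}{p-q}}\tau(z)^{-2}dA(z)<\infty$ collapses to $\int_{\CC}(1+|z|)^{\frac{pq}{p-q}(m-1)}dA(z)<\infty$, which is precisely the paper's criterion $\tfrac{pq}{p-q}(m-1)<-2$. The architecture, however, is genuinely different. The paper never passes through a Carleson embedding theorem: for $p\le q$ it estimates $\|Df\|_{(m,q)}$ directly, combining the Littlewood--Paley formula \eqref{paley} with the covering Lemma~\ref{lem4}, the subharmonic estimate \eqref{pointwise}, and Minkowski's inequality; for $q<p$ it runs the Luecking--Khinchine randomization and $\ell^{p/q}$--$\ell^{p/(p-q)}$ duality inline, and gets compactness from H\"older's inequality against the tail of the convergent integral. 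You instead convert $D$ into the identity embedding of $\mathcal{F}_{(m,p)}$ into $L^q$ of the measure $(1+|z|)^{q(m-1)}e^{-q|z|^m}dA$ --- your Cauchy-on-the-flat-scale inequality is in effect a polynomially weighted variant of one half of \eqref{paley} --- and then quote the known $(p,q)$-Fock--Carleson characterizations (available in the Constantin--Pel\'{a}ez and Pau--Pel\'{a}ez papers this article already cites). Your organization buys a single symbol $\sigma(z)=\|Dk_z\|_{(m,q)}$ that decides boundedness, compactness, and both regimes at once, and it makes the equivalence of boundedness and compactness for $q<p$ transparent; the paper's inline route buys self-containedness (no need to verify that the quoted embedding theorems apply to the measure $(1+|z|)^{q(m-1)}dA$ for every $m>0$) plus an explicit two-sided estimate for $\|D\|$, which your reduction does not state.

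One caution, which you partly anticipated: the crux of necessity in both arguments is the same unproved ingredient, a derivative lower bound for the Borichev--Dhuez--Kellay test functions, $|f_{(w,R)}'(w)|e^{-|w|^m}\gtrsim (1+|w|)^{m-1}$ --- your $|k_z'|\asymp (1+|z|)^{m-1}|k_z|$ on the bump. The paper asserts this with a bare ``$\simeq$'' in the first display of its necessity proof; you flag it as the main obstacle. Without it, testing $D$ on $k_z$ gives no lower bound at all, so neither proof closes. Your proposal is therefore not missing anything the paper supplies, but a complete write-up must extract this estimate from the test-function construction, and in your version one must also check that the cited embedding theorems cover the weight $|z|^m$ for all $m>0$, including $m=1$, where the paper's $\tau_m$ degenerates to a constant while your $\tau(z)=(1+|z|)^{1/2}$ is the natural scale.
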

The result effectively  identifies the generalized Fock spaces on which the differential operator admits boundedness and compactness operator-theoretic structures. In particular,  when $p= q,$ the operator $D$ enjoys any of the basic spectral structures  on  $\mathcal{F}_{(m, p)}$ only if the  corresponding weight functions $\psi_m$    grow at most  polynomials of degree not exceeding one. If $p<q$, then $\psi_m$ could grow a  bite faster   as
 \begin{align*}
\frac{pq}{pq+ q-p}<1.
 \end{align*}
 On the other hand, if $p>q$, then $\psi_m$    grows  slower than a polynomials of degree  one.

 We note in passing that if we replace both the domain and target spaces by the corresponding growth type spaces $\mathcal{F}_{(m, \infty)}$ which consist of entire functions $f$ for which
\begin{align*}
 \|f\|_{(m, \infty)}= \sup_{z\in \CC} |f(z)|e^{-|z|^m}<\infty,
\end{align*} the same conclusion, $m\leq 1$,  follows which can be also   seen   for example in \cite{Maria, Harutyunyan} as a particular instance.

Our next main result gives a  condition on  the growth of $|z|^m$ under which $D$ belongs to the Schatten $\mathcal{S}_p(\mathcal{F}_{(m,2)})$ class and also identifies its spectrum.
\begin{theorem}\label{thm2}
\begin{enumerate}
\item Let $0< p<\infty$,   $m>0$, and  $D: \mathcal{F}_{(m,2)} \to \mathcal{F}_{(m,2)}$ is compact. Then it   belongs to the Schatten $\mathcal{S}_p(\mathcal{F}_{(m,2)})$ class   for all $p$.
\item Let $1\leq p<\infty$ and $m>0$, and  $D: \mathcal{F}_{(m,p)} \to \mathcal{F}_{(m,p)}$ is bounded, i.e $m\leq 1$.  Then  its spectrum $\sigma(D)= \{0\}$  whenever $m<1$ and   when  $m= 1$;
\begin{align*}
\sigma(D)=\overline{\{\lambda\in \CC : e^{\lambda z }\in \mathcal{F}_{(m,p)}\}}=\overline{\D}.
\end{align*}
\end{enumerate}
\end{theorem}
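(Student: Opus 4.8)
The plan is to exploit the rotational symmetry of the weight $\psi_m(z)=|z|^m$. Because the measure $e^{-2|z|^m}\,dA$ is radial, the monomials $\{z^n\}_{n\ge0}$ are mutually orthogonal in $\mathcal F_{(m,2)}$, and a computation in polar coordinates gives the moments
$$\gamma_n:=\|z^n\|_{(m,2)}^2=\frac{\pi}{m}\,2^{\,1-(2n+2)/m}\,\Gamma\!\Big(\tfrac{2n+2}{m}\Big).$$
Since $Dz^n=nz^{n-1}$, in the orthonormal basis $e_n=z^n/\sqrt{\gamma_n}$ the operator $D$ is the weighted backward shift $De_n=w_ne_{n-1}$ for $n\ge1$ and $De_0=0$, with weights $w_n=n\sqrt{\gamma_{n-1}/\gamma_n}$. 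Then $D^*D$ is diagonal with entries $w_n^2$, so the singular values of $D$ are precisely the numbers $w_n$.

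For part (1) I would first pin down the asymptotics of the weights. The moment formula gives $\gamma_{n-1}/\gamma_n=2^{2/m}\,\Gamma(2n/m)/\Gamma((2n+2)/m)$, and Stirling's ratio $\Gamma(x)/\Gamma(x+a)\sim x^{-a}$ then yields $w_n^2\sim m^{2/m}\,n^{\,2-2/m}$, i.e.
$$w_n\asymp n^{\,1-1/m}.$$
In particular $w_n\to0$ exactly when $m<1$, which recovers the compactness criterion of Theorem \ref{thm1} for $p=q=2$. Membership of $D$ in $\mathcal S_p(\mathcal F_{(m,2)})$ is equivalent to $\sum_{n\ge1}w_n^p<\infty$, and once $m<1$ the weights decay polynomially with the negative exponent $1-1/m$, so the series is summable and $D\in\mathcal S_p$. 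The only genuine work here is the Stirling estimate and the summability bookkeeping.

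For part (2) the boundedness hypothesis forces $m\le1$ by Theorem \ref{thm1}. I would start from the eigenvalue equation: $Df=\lambda f$ means $f'=\lambda f$, so every eigenfunction is a scalar multiple of $e^{\lambda z}$, and $\lambda\in\sigma_p(D)$ iff $e^{\lambda z}\in\mathcal F_{(m,p)}$. A polar-coordinate estimate of $\int_\CC e^{p\operatorname{Re}(\lambda z)-p|z|^m}\,dA$ shows that when $m<1$ the linear term $\operatorname{Re}(\lambda z)$ overwhelms $|z|^m$ along the direction maximizing $\operatorname{Re}(\lambda z)$ unless $\lambda=0$; hence $0$ is the only eigenvalue. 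As $D$ is compact for $m<1$, its spectrum consists of $0$ together with its nonzero eigenvalues, so $\sigma(D)=\{0\}$. When $m=1$ the same integral converges iff $|\lambda|<1$, whence $\sigma_p(D)=\D$ and therefore $\overline{\D}\subseteq\sigma(D)$.

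It remains, for $m=1$, to prove the reverse inclusion $\sigma(D)\subseteq\overline{\D}$, and this is the heart of the matter. For $|\lambda|>1$ I would construct the resolvent explicitly by solving $f'-\lambda f=g$ through the contour integral
$$f(z)=-e^{-i\phi}\int_0^\infty e^{-|\lambda|s}\,g\big(z+e^{-i\phi}s\big)\,ds,\qquad \phi=\arg\lambda,$$
which is entire and convergent because the pointwise growth bound $|g(w)|\lesssim\|g\|_{(1,p)}\,e^{|w|}$ is dominated by $e^{-|\lambda|s}$. To see that $f\in\mathcal F_{(1,p)}$ with controlled norm, I would apply Minkowski's integral inequality (this is where $p\ge1$ enters) together with the translation estimate $\|g(\cdot+e^{-i\phi}s)\|_{(1,p)}\le e^{s}\|g\|_{(1,p)}$, which follows from $|z+e^{-i\phi}s|\ge|z|-s$, to obtain
$$\|f\|_{(1,p)}\le\int_0^\infty e^{-|\lambda|s}e^{s}\,ds\,\|g\|_{(1,p)}=\frac{\|g\|_{(1,p)}}{|\lambda|-1}.$$
Combined with injectivity of $D-\lambda$ (again because $e^{\lambda z}\notin\mathcal F_{(1,p)}$ for $|\lambda|>1$), this shows $D-\lambda$ is boundedly invertible, so $\lambda\notin\sigma(D)$. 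Since the spectrum is closed and contains $\D$, we conclude $\sigma(D)=\overline{\D}$. The main obstacle is precisely this resolvent estimate — producing an entire, norm-bounded solution rather than a merely formal one — for which the choice of integration ray and the Minkowski/translation argument are essential.
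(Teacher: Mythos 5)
Part (1) of your proposal contains a genuine error, and it sits exactly at the step you dismiss as bookkeeping. Your reduction is right: the monomials are orthogonal, $D$ acts as the weighted backward shift $De_n=w_ne_{n-1}$ with $w_n=n\sqrt{\gamma_{n-1}/\gamma_n}$, the singular values of $D$ are precisely the $w_n$, and the Stirling ratio gives $w_n\asymp n^{1-1/m}$. But polynomial decay does not give $\ell^p$-summability for every $p$: since $w_n\asymp n^{-(1/m-1)}$, one has $\sum_n w_n^p<\infty$ if and only if $p\,(1/m-1)>1$, i.e.\ $p>m/(1-m)$. For $m=1/2$, for instance, your asymptotics give $w_n\asymp 1/n$, so $D$ is compact (indeed Hilbert--Schmidt) but \emph{not} trace class, and $D\notin\mathcal{S}_p$ for any $p\le 1$. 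So your own (correct) computation refutes the conclusion you assert: what it actually proves is that for $m<1$ the operator $D$ belongs to $\mathcal{S}_p(\mathcal{F}_{(m,2)})$ precisely when $p>m/(1-m)$, never ``for all $p$.''

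This puts your computation in conflict with the theorem itself, and the paper's own proof does not resolve the conflict because it is invalid: the paper checks $\sum_n|\langle De_n,e_n\rangle|^p=0<\infty$ in the single monomial basis and invokes the diagonal criterion of \cite[Theorem 1.27]{KZH1}, but that criterion requires the diagonal condition in \emph{every} orthonormal basis. Verifying it in one basis proves nothing --- the identical vanishing-diagonal computation would place $D$ in every Schatten class on the classical Fock space ($m=2$), where $D$ is not even bounded. Your weighted-shift argument is the honest way to settle the question, and the answer it yields is the cutoff $p>m/(1-m)$, not the statement as written.

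Part (2) of your proposal is correct, and at the decisive step it takes a genuinely different route from the paper. Both arguments identify the eigenfunctions $ce^{\lambda z}$, get $\sigma(D)=\{0\}$ for $m<1$ from compactness and Riesz theory, and get $\D\subseteq\sigma(D)$ when $m=1$; the difference is the inclusion $\sigma(D)\subseteq\overline{\D}$. The paper writes the solution of $\lambda f-Df=h$ as $e^{\lambda z}\bigl(C-\int_0^z e^{-\lambda w}h(w)\,dw\bigr)$ and then must appeal to its Lemma~\ref{lem2} (proved by a normal-families contradiction argument, and whose statement in fact fails for $|\lambda|<1$: take $f\equiv 1$) to bound the resolvent. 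You instead represent the resolvent as a Laplace-type average of translates along the ray in the direction $e^{-i\phi}$, $\phi=\arg\lambda$, and bound it by Minkowski's integral inequality (where $p\ge 1$ enters) together with the translation estimate $\|g(\cdot+a)\|_{(1,p)}\le e^{|a|}\|g\|_{(1,p)}$, arriving at the explicit bound $\|R_\lambda\|\le(|\lambda|-1)^{-1}$ for $|\lambda|>1$. That argument is self-contained, quantitative, and avoids the questionable lemma entirely; combined with injectivity and the closedness of the spectrum it gives $\sigma(D)=\overline{\D}$ cleanly, and I consider it an improvement on the paper's treatment of this part.
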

\section{Preliminaries} In this  section we collect a few  basic  facts which will be used in the proofs of the main results. From \cite{Olivia2}, the Littlewood--Paley type estimate \footnote{The  notation $U(z)\lesssim V(z)$ (or
equivalently $V(z)\gtrsim U(z)$) means that there is a constant
$C$ such that $U(z)\leq CV(z)$ holds for all $z$ in the set of a
question. We write $U(z)\simeq V(z)$ if both $U(z)\lesssim V(z)$
and $V(z)\lesssim U(z)$.}
\begin{align}
\label{paley}
\|f\|_{(m, p)}^p \simeq |f(0)|^p + \int_{\CC} \frac{|f'(z)|^p e^{-p|z|^m}}{ (1+ |z|)^{p(m-1)}} dA(z)
\end{align} holds  for functions $f$  in the space $\mathcal{F}_{(m,p)}$. Such a formula characterizes the spaces in terms of derivatives, and plays a significant roll  specially in the study of integral operators on the spaces.
\begin{lemma}\label{lem2}
Let $\lambda \in \CC$ and $0<p<\infty$. Then  for each entire function $f$ for which $fe^\lambda \in \mathcal{F}_{(m,p)}$, we have
\begin{align}
\label{spectrum}
\int_{\CC} |f(z)e^{\lambda z}|^p e^{-p|z|} dA(z) \lesssim  \int_{\CC} |f'(z)e^{\lambda z}|^p e^{-p|z|}dA(z).
\end{align}
\end{lemma}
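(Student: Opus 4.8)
The plan is to recognize \eqref{spectrum} as a resolvent lower bound for the first-order operator $D-\lambda$ and to prove it through an explicit solution of the associated ordinary differential equation. Put $g=fe^{\lambda z}$; since $f\mapsto fe^{\lambda z}$ is a bijection of the entire functions and $g\in\mathcal{F}_{(1,p)}$ by hypothesis, and since $g'-\lambda g=f'e^{\lambda z}$, the two integrals in \eqref{spectrum} are exactly $\|g\|_{(1,p)}^p$ and $\|g'-\lambda g\|_{(1,p)}^p$. Thus \eqref{spectrum} is equivalent to
\begin{align*}
\|g\|_{(1,p)}\lesssim\|(D-\lambda)g\|_{(1,p)}.
\end{align*}
I would carry the argument out in the range $|\lambda|>1$, which is the substantive one: it is exactly the range needed to push the spectrum outside $\overline{\D}$, and it is the range in which the hypothesis $g\in\mathcal{F}_{(1,p)}$ forces $f$ to decay (for $|\lambda|<1$ the constant functions already show the inequality must fail).

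Next I would solve $(D-\lambda)g=u$ explicitly. Multiplying by the integrating factor $e^{-\lambda z}$ gives $(e^{-\lambda z}g)'=e^{-\lambda z}u$; integrating along the ray $\zeta=z+td$, $t\geq0$, in the unit direction $d=\overline{\lambda}/|\lambda|$ (chosen so that $\lambda d=|\lambda|>0$ and $e^{-\lambda\zeta}$ decays), and using the membership $g\in\mathcal{F}_{(1,p)}$ to discard the boundary term at infinity, produces
\begin{align*}
g(z)=-d\int_0^\infty e^{-t|\lambda|}\,u(z+td)\,dt,\qquad\text{so that}\qquad |g(z)|\leq\int_0^\infty e^{-t|\lambda|}\,|u(z+td)|\,dt.
\end{align*}
The one analytic input I would isolate is that translation distorts the weight by at most an exponential: substituting $w=z+td$ and using $\big|\,|w|-|w-td|\,\big|\leq t$ yields $\int_{\CC}|u(z+td)|^p e^{-p|z|}\,dA(z)\leq e^{pt}\|u\|_{(1,p)}^p$.

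For $p\geq1$ I would then apply Minkowski's integral inequality to move the $L^p(e^{-p|z|}dA)$ norm inside the $t$-integral and insert the last estimate:
\begin{align*}
\|g\|_{(1,p)}\leq\int_0^\infty e^{-t|\lambda|}\Big(\int_{\CC}|u(z+td)|^p e^{-p|z|}dA(z)\Big)^{1/p}dt\leq\|u\|_{(1,p)}\int_0^\infty e^{-(|\lambda|-1)t}dt=\frac{\|u\|_{(1,p)}}{|\lambda|-1},
\end{align*}
which is finite precisely because $|\lambda|>1$. This is \eqref{spectrum}; moreover the same computation shows that $u\mapsto-d\int_0^\infty e^{-t|\lambda|}u(\cdot+td)\,dt$ is a bounded two-sided inverse of $D-\lambda$ on $\mathcal{F}_{(1,p)}$, which is the by-product the spectral statement $\sigma(D)\subseteq\overline{\D}$ will need.

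The hard part is the quasi-Banach range $0<p<1$, where Minkowski's inequality is no longer available. There I would split the ray into unit blocks, $g(z)=-d\sum_{n\geq0}\int_n^{n+1}e^{-t|\lambda|}u(z+td)\,dt$, use the $p$-subadditivity $\big(\sum_n a_n\big)^p\leq\sum_n a_n^p$, and on each block replace the pointwise values of $u$ by a local $L^p$-mass via the sub-mean value inequality for the subharmonic function $|u|^p$, namely $|u(z+td)|^p\lesssim\int_{B(z+td,1)}|u|^p\,dA$ uniformly for $t\in[n,n+1]$. After this reduction the remaining integrations involve only nonnegative $L^p$-masses, so Fubini applies cleanly; combined with the translation-weight estimate it yields, for each block, a bound by $e^{-np(|\lambda|-1)}\|u\|_{(1,p)}^p$, and summing the geometric series (again convergent for $|\lambda|>1$) gives \eqref{spectrum} with a constant depending only on $p$ and $|\lambda|$. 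The remaining routine points to check are the convergence of the defining ray integral and the vanishing of the boundary term at infinity, both of which follow from the standard pointwise growth bounds for functions in $\mathcal{F}_{(1,p)}$.
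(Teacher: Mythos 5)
Your argument is correct where it applies, but it takes a genuinely different route from the paper's. The paper argues qualitatively, by contradiction, following ideas from Proposition~1 of \cite{Olivia2}: if \eqref{spectrum} failed, there would be entire functions $f_n$ with $\int_{\CC}|f_ne^{\lambda z}|^pe^{-p|z|}dA=1$ while $\int_{\CC}|f_n'e^{\lambda z}|^pe^{-p|z|}dA<1/n$; the point-evaluation estimate then forces $f_n'\to 0$ (and, the paper asserts, $f_n\to 0$) uniformly on compact sets, and splitting the normalizing integral over $\{|z|\le r\}$ and $\{|z|>r\}$ is supposed to yield a contradiction. No inverse of $D-\lambda$ is ever constructed and no constant is produced. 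You instead solve $(D-\lambda)g=u$ explicitly by an integrating factor along the ray in the direction $d=\overline{\lambda}/|\lambda|$, and estimate the resulting integral operator by Minkowski's integral inequality when $p\ge 1$, and by block decomposition, $p$-subadditivity and the sub-mean-value inequality for $|u|^p$ when $0<p<1$. The individual steps are sound: the boundary term at infinity vanishes because $|g(w)|\lesssim \|g\|_{(1,p)}e^{|w|}$ and $|\lambda|>1$, and the translation bound $\int_{\CC}|u(z+td)|^pe^{-p|z|}dA(z)\le e^{pt}\|u\|_{(1,p)}^p$ is exactly right. Your method also buys more: an explicit constant of order $(|\lambda|-1)^{-1}$, and simultaneously the boundedness of the resolvent $R_\lambda$ on $\mathcal{F}_{(1,p)}$, which the paper has to argue separately in the proof of Theorem~\ref{thm2}.

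Your restriction to $|\lambda|>1$ is not a loss of generality but a necessity, because the lemma as printed, for arbitrary $\lambda\in\CC$, is false. For $|\lambda|<1$ a nonzero constant $f$ gives a positive left side and a vanishing right side, as you note. At $|\lambda|=1$ and $p\ge 1$ it fails as well, though less obviously: granting the resolvent bound for $|\lambda|>1$ (which your argument supplies), one gets $\sigma(D)=\overline{\D}$, so each unimodular $\lambda$ lies on the boundary of the spectrum, hence in the approximate point spectrum, and there exist unit vectors $g_n\in\mathcal{F}_{(1,p)}$ with $\|(D-\lambda)g_n\|_{(1,p)}\to 0$. This is consistent with the fact that the paper's contradiction argument has two unjustified steps, precisely where a restriction on $\lambda$ would have to enter: the passage from ``$f_n'\to 0$ on compacts'' to ``$f_n\to 0$ on compacts'' (which needs a normal-families and Fatou argument, valid only when $e^{\lambda z}\notin\mathcal{F}_{(1,p)}$), and the treatment of $\int_{|z|>r}|f_ne^{\lambda z}|^pe^{-p|z|}dA$ as the tail of a single convergent integral even though it depends on $n$, i.e.\ uniform integrability of the sequence is silently assumed. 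Since Theorem~\ref{thm2}(ii) only ever needs the inequality for $|\lambda|>1$ (the spectrum being closed), your proof fully serves the purpose the lemma is put to.
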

This is a key estimation result which helps us obtain our main result on the spectrum of the operator $D$ in Theorem~\ref{thm2}.
\begin{proof}
The proof of  the lemma follows from some ideas  stemmed  in the proof of Proposition~1 in  \cite{Olivia2}.  We argue in the direction of contradiction and assume that \eqref{spectrum} fails to hold. Then, we can find a sequence of  entire functions $(f_n)$  satisfying $f_ne^\lambda \in \mathcal{F}_{(m,p)}$,
\begin{align*}
\int_{\CC} |f_n(z)e^{\lambda z}|^p e^{-p|z|} dA(z)= 1 \  \ \text{and} \ \  \int_{\CC} |f_n'(z)e^{\lambda z}|^p e^{-p|z|} dA(z) <\frac{1}{n}.
\end{align*}
Now, if  $K$ is a compact subset of $\mathcal{\CC},$ the point evaluation estimate for functions in $\mathcal{F}_{(m,p)}$ ( see the analysis in \cite{Olivia})
gives that
\begin{align*}
|f_n'(z)e^{\lambda z}| \lesssim C  \int_{K} |f_n'(z)e^{\lambda z}|^p e^{-p|z|} dA(z) \leq C \frac{1}{n^p}
\end{align*} for some positive constant $C$ that depends only on  $K$. From this it follows that the sequence $f_n'$ converges to zero uniformly on compact subset of $\CC$.  This shows that $f_n$ also converges  to zero uniformly on the compact subsets. We may rewrite
\begin{align}
\label{newargument}
1=\int_{\CC} |f_n(z)e^{\lambda z}|^p e^{-p|z|} dA(z)= \int_{|z|\leq r} |f_n(z)e^{\lambda z}|^p e^{-p|z|} dA(z)\quad \quad \quad \quad \nonumber\\
+ \int_{|z|>r} |f_n(z)e^{\lambda z}|^p e^{-p|z|} dA(z).
\end{align}
Now the first integral on the right-hand side of $\eqref{newargument}$ tends to zero when $n\to \infty$ since $f_n\to 0$ uniformly on $\{z\in \CC: |z|\leq r\}$. On the other hand, the second integral is the tile of a convergent integral and hence tend to zero when $r\to \infty$, and the contradiction follows.
\end{proof}
We denote by $K_{(m,w)}$ the reproducing kernel of the space $\mathcal{F}_{(m,2)}$ at the point $w\in \CC$.   Because of the reproducing property of the kernel and Parseval identity, it  holds  that
\begin{align*}
K_{(m,w)}(z)=\sum_{n=1}^\infty \langle K_{(m,w)}, e_n\rangle e_n(z) \  \text{and}\ \
\| K_{(m,w)}\|_{(m,2)}^2= \sum_{n=1}^\infty |e_n(w)|^2
\end{align*} for any orthonormal basis $(e_n)_{n\in\NN}$ of $\mathcal{F}_{(m, 2)}$.   An immediate consequence of this representation is  that
  \begin{align}
  \label{sidee}
  \frac{\partial}{\partial\overline{w}}K_{(m,w)}(z)=\sum_{n=1}^\infty e_n(z) \overline{e_n'(w)}, \  \text{and}\ \ \Big\|\frac{\partial}{\partial \overline{w}} K_{(m,w)}\Big\|_{(m,2)}^2= \sum_{n=1}^\infty |e_n'(w)|^2.
  \end{align}
 An explicit expression for the reproducing kernel $K_{(w,m)}$ in  the weighted space $\mathcal{F}_{(m,2)}$ is still  unknown apart from the case when $m= 2$.  From \cite{HH}, we already have an estimate for the norm
\begin{align}
\label{normapproximate}
\|K_{(m,w)}\|_{(m,2)}^2 \simeq |w|^{m-2} e^{2|w|^m}.
\end{align} As a consequence of this, we obtain the following useful estimate for our further consideration.
\begin{lemma}\label{lem3} For each $w\in \CC$, we have the asymptotic estimate
\begin{align}
 \ \Big\|\frac{\partial}{\partial \overline{w}} K_{(m,w)}\Big\|_{(m,2)}^2 \simeq  \|K_{(m,w)}\|_{(m,2)}^2 |w|^{2m-2} \simeq |w|^{3m-4} e^{2|w|^m}.
 \end{align}
\end{lemma}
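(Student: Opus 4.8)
The second asymptotic identity is immediate: substituting the kernel norm estimate \eqref{normapproximate} gives
\[
\|K_{(m,w)}\|_{(m,2)}^2\,|w|^{2m-2} \simeq |w|^{m-2}e^{2|w|^m}\,|w|^{2m-2} = |w|^{3m-4}e^{2|w|^m},
\]
so the whole task reduces to the first relation
\[
\Big\|\frac{\partial}{\partial \overline{w}} K_{(m,w)}\Big\|_{(m,2)}^2 \simeq |w|^{2m-2}\,\|K_{(m,w)}\|_{(m,2)}^2 .
\]
Since the weight $e^{-2|z|^m}$ is radial, the monomials are mutually orthogonal in $\mathcal{F}_{(m,2)}$, and I would take the orthonormal basis $e_n(z)=z^n/\|z^n\|_{(m,2)}$. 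Plugging $e_n'(w)=n w^{n-1}/\|z^n\|_{(m,2)}$ into the series expressions from \eqref{sidee} and into $\|K_{(m,w)}\|_{(m,2)}^2=\sum_n |e_n(w)|^2$ turns the desired relation into the purely elementary comparison of two power series in $r=|w|$,
\[
\sum_{n} \frac{n^2 r^{2n-2}}{\|z^n\|_{(m,2)}^2} \simeq r^{2m-2}\sum_{n}\frac{r^{2n}}{\|z^n\|_{(m,2)}^2}.
\]
Equivalently, writing $\Phi(w)=\|K_{(m,w)}\|_{(m,2)}^2$, the identity $\|\partial_{\overline w}K_{(m,w)}\|_{(m,2)}^2=\partial_w\partial_{\overline w}\Phi=\tfrac14\Delta\Phi$ shows that the claim amounts to saying that the Laplacian of the radial function $\Phi$ contributes exactly the factor $|w|^{2m-2}$.

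The plan for the comparison is a concentration (Laplace/saddle-point) argument. Using the explicit value $\|z^n\|_{(m,2)}^2=\frac{2\pi}{m}2^{-(2n+2)/m}\Gamma\!\big(\tfrac{2n+2}{m}\big)$ and the ratio of consecutive terms, I would show that both series are dominated by the indices $n$ lying in a window about the maximal term, which sits at $n^\ast\simeq |w|^m$. For such $n$ one has $n^2/r^2\simeq r^{2m-2}$, and this is precisely the source of the claimed factor: on the dominant window the numerator series is comparable to $r^{2m-2}$ times the denominator series.

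For the rigorous two-sided bound I would proceed as follows. For the lower estimate it suffices to restrict both sums to a fixed window $|n-n^\ast|\le \delta n^\ast$; there $n^2/r^2\gtrsim r^{2m-2}$, while the restricted denominator sum is already comparable to the full sum $\Phi$ because the coefficients decay away from their peak. For the upper estimate I would split at $n^\ast$: the terms with $n\lesssim r^m$ are controlled by $n^2/r^2\lesssim r^{2m-2}$ directly, and the remaining tail, even after the extra factor $n^2$, is negligible owing to the super-exponential decay of $r^{2n}/\|z^n\|_{(m,2)}^2$ beyond the maximal term. An independent check of the upper bound is available through a pointwise derivative estimate: applying Cauchy's integral formula on the disk $D(w,\rho)$ with $\rho\simeq |w|^{1-m}$ --- the natural scale on which $e^{-2|z|^m}$ is comparable to $e^{-2|w|^m}$ --- together with the point-evaluation bound $|f(\zeta)|\lesssim \|f\|_{(m,2)}\|K_{(m,\zeta)}\|_{(m,2)}$ and \eqref{normapproximate}, yields $|f'(w)|\lesssim |w|^{m-1}\|f\|_{(m,2)}\|K_{(m,w)}\|_{(m,2)}$ and hence $\|\partial_{\overline w}K_{(m,w)}\|_{(m,2)}\lesssim |w|^{m-1}\|K_{(m,w)}\|_{(m,2)}$.

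The main obstacle is the concentration step: making precise that the positive power series $\sum_n r^{2n}/\|z^n\|_{(m,2)}^2$ and its $n^2$-weighted analogue are each governed, up to comparable constants, by a bounded neighborhood of the maximal-term index $n^\ast\simeq |w|^m$, and in particular that the $n^2$ weighting does not shift the balance of mass into the tail. This requires Stirling asymptotics for $\Gamma((2n+2)/m)$ both to locate $n^\ast$ and to quantify the decay on either side of it. The regime of small $|w|$, where the prefactors such as $|w|^{m-2}$ are harmless, is handled separately by continuity, the asymptotic $\simeq$ being a statement about the behaviour for $|w|$ bounded away from the origin.
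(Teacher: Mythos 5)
Your opening reduction coincides with the paper's: expanding in the monomial orthonormal basis $e_n(z)=z^n/\|z^n\|_{(m,2)}$ and using \eqref{sidee} and \eqref{normapproximate}, both you and the author turn the lemma into the one--variable comparison
\begin{equation*}
\sum_{n} \frac{n^2 r^{2n-2}}{\|z^n\|_{(m,2)}^2} \simeq r^{2m-2}\sum_{n}\frac{r^{2n}}{\|z^n\|_{(m,2)}^2}, \qquad r=|w|.
\end{equation*}
From there, however, the routes genuinely diverge. The paper does no saddle-point analysis at all: it sets $\Psi(r)=r^{(m-2)/2}e^{r^m}$, notes that the right-hand series is $\simeq \Psi(r)^2$ by \eqref{normapproximate}, verifies by direct differentiation the two regularity conditions $\Psi'(r)\simeq \Psi(r)r^{m-1}$ and $\limsup_{r\to\infty}\Psi''(r)\Psi(r)/(\Psi'(r))^2<\infty$, and then invokes Lemma~21 of \cite{Olivia}, which delivers $M_2(r,f')\simeq \Psi'(r)$ for $f=\sum_n z^n/\|z^n\|_{(m,2)}$; the concentration-of-mass phenomenon you propose to establish with Stirling asymptotics is precisely what that cited lemma packages. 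Your route instead proves the comparison from scratch: the explicit value $\|z^n\|_{(m,2)}^2=\tfrac{2\pi}{m}2^{-(2n+2)/m}\Gamma\bigl(\tfrac{2n+2}{m}\bigr)$ (which is correct), the maximal-term index $n^\ast\simeq r^m$, a window/tail splitting, and--as an independent check--an upper bound via Cauchy's formula on $D(w,|w|^{1-m})$ combined with the duality $\sup_{\|f\|_{(m,2)}\le 1}|f'(w)|=\|\partial_{\overline w}K_{(m,w)}\|_{(m,2)}$, which is clean and correct. What the paper's approach buys is brevity and safety: two elementary computations replace your Stirling analysis, at the price of leaning on an external lemma. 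What yours buys is self-containedness; but note that your central concentration step is still a plan rather than a completed argument (you flag it yourself as the main obstacle), so a finished write-up would either have to carry out those routine-but-not-short estimates or, as the paper does, quote a result that already contains them. Finally, your remark on the regime of validity is well taken and sharper than the paper's statement: ``for each $w\in\CC$'' cannot be meant literally, since at $w=0$ the left-hand side equals $1/\|z\|_{(m,2)}^2\in(0,\infty)$ while $|w|^{3m-4}e^{2|w|^m}$ tends to $0$ or $\infty$ when $m\neq 4/3$; both \eqref{normapproximate} and the lemma are asymptotics for $|w|$ bounded away from the origin.
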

\begin{proof}
  For simplicity, setting $\Psi(r)= r^{\frac{m-2}{2}} e^{r^m}$ and \begin{align*}
  f(z)=\sum_{n= 0}^\infty \frac{z^n}{ \|z^n\|_{(m,2)}},
  \end{align*}  then we have that
\begin{align*}
M_2(r, f)^2 \simeq \int_{-\pi}^\pi |f(re^{i\theta})|^2 d\theta \simeq (\Psi(r))^2.
\end{align*}
  If we show that
\begin{align*}
\limsup_{r \to \infty} \frac{\Psi''(r)\Psi(r)}{(\Psi'(r))^2} <\infty \ \ \text{and}\ \ \Psi'(r) \simeq \Psi(r) r^{m-1},
\end{align*}
 then our conclusion will follow from Lemma~21 of \cite{Olivia} as
\begin{align*}
M_2(r, f')\simeq \Big\|\frac{\partial}{\partial \overline{w}} K_{(m,w)}\Big\|_{(m,2)}\simeq \Psi'(r) \simeq \Psi(r) r^{m-1}.
\end{align*}To this end, we compute
\begin{align*}
\Psi'(r)= \frac{m-2}{2}r^{\frac{m-4}{2}} e^{r^m} + m r^{\frac{3m-4}{2}} e^{r^m} = e^{r^m} r^{\frac{m-2}{2}-1}\bigg(\frac{m-2}{2}+ mr^m \bigg)\nonumber\\
\simeq e^{r^m} r^{\frac{m-2}{2}-1+m} \simeq  \Psi(r) r^{m-1}.
\end{align*}
Furthermore, a   computation shows that
\begin{align*}
(\Psi'(r))^2 \simeq e^{2r^m}  r^{m-2+ 2(m-1)} \ \ \text{and}\ \ \Psi''(r)\simeq e^{r^m}  r^{\frac{m-2}{2}+ 2(m-1)}
\end{align*} from which we have
\begin{align*}
\limsup_{r \to \infty} \frac{\Psi''(r)\Psi(r)}{(\Psi'(r))^2} \simeq \limsup_{r \to \infty}\frac{e^{2r^m} r^{\frac{m-2}{2}+ 2(m-1)}r^{\frac{m-2}{2}}}{e^{2r^m}  r^{m-2+ 2(m-1)}}\simeq 1.
\end{align*}
  \end{proof}
It has been  a fairly    common practice to  test many operator-theoretic properties on the reproducing kernels for the spaces. In the present setting, no explicit expression is known for the kernel function. Thus, for proving  our mains results, we will rather  use another sequence of test function which replaces the role of the reproducing kernel. Such a sequence was first constructed in \cite{Borch} and  has been further used    by several authors  for example \cite{Olivia,JPP,TM3}.  We introduce the sequence of test functions as follows.  We set
\begin{align*}\tau_m(z)=
\begin{cases}   1,  & 0\leq|(m^2-m)z|<1\\
\frac{|z|^{\frac{2-m}{2}}}{ |m^2-m|^{\frac{1}{2}}},\ \  & |(m^2-m)z|\geq 1.
 \end{cases}
 \end{align*}
 For a sufficiently large positive number $R$, there exists a number $\eta(R)$ such that for any  $w\in \CC$ with $|w|> \eta(R)$, there exists an entire function $f_{(w, R)}$ such that
  \begin{enumerate}
  \item
    \begin{align}
  \vspace{-0.3in}
  \label{test00}
      |f_{(w,R)}(z)| e^{-|z|^m}\leq C \min\Bigg\{ 1,\bigg(\frac{\min\{\tau_m(w), \tau_m(z)\}}{|z-w|}\bigg)^{\frac{R^2}{2}}\Bigg\}  \ \ \ \ \ \ \end{align}for all  $ z\in \CC$ and  for some constant $C$ that depends on $|z|^m$ and $R$. In particular when $z \in D(w, R\tau_m(w))$, the estimate becomes
   \begin{align}
  \label{test0}
   |f_{(w,R)}(z)| e^{-|z|^m}\simeq 1.
    \end{align}
  \item $f_{(w, R)}$ belongs to $\mathcal{F}_{(m,p)}$  and its norm is estimated by
\begin{align}
\label{test}
\| f_{(w,R)}\|_{(m, p)}^p \simeq \tau_m^2(w),\ \ \ \  \eta(R) \leq |w|
\end{align} for all  $p$ in the range $0<p<\infty$.\\
Another important ingredient in our subsequent consideration is the pointwise estimate for subharmonic functions $|f|^p$, namely that
\begin{align}
\label{pointwise}
|f(z)|^p e^{-p |z|^m} \lesssim \frac{1}{\sigma^2\tau_m^2(z)} \int_{D(z, \sigma \tau_m(z))} |f(w)|^pe^{-p|w|^m} dA(w)
\end{align} for all finite exponent $p$  and  a small positive number $\sigma$.  The estimate follows from
Lemma~2  of \cite{JPP}.
 \end{enumerate}
  Next, we   recall  the notion of
  covering for the  space $\CC$.  We  denote by $D(w,r)$ the Euclidean disk centered at $w$ and radius $r>0$.  Then,  we record the following useful covering lemma which is essentially from \cite{Olivia,OVL}.
\begin{lemma}\label{lem4}
 Let $\tau_m$ be as above.  Then, there exists a positive $\sigma >0$ and a sequence of points $z_j$ in $\CC$ satisfying the following conditions.
 \vspace{-0.2in}
 \begin{enumerate}
 \begin{multicols}{2}
 \item $z_j\not\in D(z_k,\sigma \tau_m(z_k)), \ \ j \neq k$;
 \item $\CC= \bigcup_jD(z_j, \sigma \tau_m(z_j))$;
  \end{multicols}
 \vspace{-0.2in}
 \item $\bigcup_{z\in  D(z_j, \sigma \tau_m(z_j))}D(z, \sigma \tau_m(z)) \subset D(z_j, 3\sigma \tau_m(z_j))$;
 \item The sequence $ D(z_j, 3\sigma \tau_m(z_j))$ is a covering of $\CC$ with finite multiplicity $N_{\max}$.
 \end{enumerate}
\end{lemma}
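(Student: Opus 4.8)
The plan is to reduce all four assertions to a single regularity property of the radius function $\tau_m$, namely that it is \emph{slowly varying}: there is an absolute constant such that whenever $|z-w|\leq \tau_m(w)$ one has $\tau_m(z)\simeq \tau_m(w)$, with comparison constants independent of $z,w$. To verify this I would first observe that away from the origin $\tau_m(z)\simeq |z|^{\frac{2-m}{2}}$, so that $|\nabla\tau_m(z)|\lesssim |z|^{-\frac{m}{2}}$; since $m>0$ this gradient tends to $0$ at infinity, and a displacement of size $\sigma\tau_m(w)\simeq \sigma|w|^{\frac{2-m}{2}}$ changes $\tau_m$ by at most a factor $\sigma|w|^{-\frac{m}{2}}=o(1)$ of $\tau_m(w)$. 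On the bounded region $|(m^2-m)z|<1$ the function is identically $1$, and the two pieces match continuously across the interface, so the slowly varying property holds globally once $\sigma$ is taken small enough. This is the one genuinely analytic input; the remaining steps are soft.

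With this in hand I would construct the sequence $(z_j)$ by a maximality argument. Using Zorn's lemma (or a greedy induction exhausting $\CC$ by compacta), I select a set $\{z_j\}$ that is maximal with respect to the separation requirement in (i), that is, $z_j\notin D(z_k,\sigma\tau_m(z_k))$ for $j\neq k$; then (i) holds by construction. For the covering property (ii), suppose some $z\in\CC$ lay outside every $D(z_j,\sigma\tau_m(z_j))$. Then $z$ could be adjoined to the collection without violating the separation, the point being that the slowly varying property symmetrizes the a priori asymmetric condition: if $z_k\in D(z,C\sigma\tau_m(z))$ then $\tau_m(z)\simeq\tau_m(z_k)$, which after shrinking $\sigma$ forces $z\in D(z_k,\sigma\tau_m(z_k))$, contrary to assumption. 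Hence $z$ may be added, contradicting maximality, and (ii) follows.

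Property (iii) is then immediate from slow variation: for $z\in D(z_j,\sigma\tau_m(z_j))$ we have $\tau_m(z)\simeq\tau_m(z_j)$, so for $\sigma$ small $\tau_m(z)\leq 2\tau_m(z_j)$, and the triangle inequality gives $D(z,\sigma\tau_m(z))\subset D(z_j,\sigma\tau_m(z_j)+2\sigma\tau_m(z_j))=D(z_j,3\sigma\tau_m(z_j))$. Finally, for the finite multiplicity (iv) I would fix $z\in\CC$ and count the indices $j$ with $z\in D(z_j,3\sigma\tau_m(z_j))$; for all such $j$ slow variation gives $\tau_m(z_j)\simeq\tau_m(z)$, so the centers lie in a fixed disk $D(z,C\tau_m(z))$ while remaining mutually $\simeq\sigma\tau_m(z)$-separated by (i). Comparing the areas of the pairwise disjoint disks $D(z_j,\tfrac{\sigma}{2}\tau_m(z_j))$ with that of the enclosing disk bounds their number by a constant $N_{\max}$ depending only on $\sigma$ and $m$. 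The main obstacle throughout is the bookkeeping of constants so that a single $\sigma$ serves all four clauses at once; this is precisely where the slowly varying estimate of the first paragraph must be applied with room to spare, and it is the reason one establishes the regularity of $\tau_m$ with explicit comparison constants rather than merely qualitatively.
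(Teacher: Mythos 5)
Your overall strategy---slow variation of $\tau_m$, a maximal separated family, the triangle inequality for (iii), and area counting against pairwise disjoint disks for (iv)---is the standard construction behind this lemma; note that the paper itself gives no proof at all but only the remark that the lemma is ``essentially from'' \cite{Olivia,OVL}, so a self-contained argument is a reasonable ambition, and your treatment of (i), (iii) and (iv) is essentially sound once comparability of $\tau_m$ on the relevant disks is granted. The problem is the covering step (ii), where your argument has a genuine gap.

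Maximality of a family satisfying (i) tells you the following about a point $z$ not covered by $\bigcup_j D(z_j,\sigma\tau_m(z_j))$: since $z$ cannot be adjoined, there is some $z_k$ with $z_k\in D(z,\sigma\tau_m(z))$, i.e.\ $|z-z_k|<\sigma\tau_m(z)$, while non-coverage gives $|z-z_k|\geq \sigma\tau_m(z_k)$. These two inequalities are perfectly compatible: together they merely assert $\tau_m(z_k)<\tau_m(z)$. Slow variation then yields $\tau_m(z)\leq C\tau_m(z_k)$ for some constant $C>1$, hence only $z\in D(z_k,C\sigma\tau_m(z_k))$; that is, the maximal family covers $\CC$ at radius $C\sigma\tau_m$, not $\sigma\tau_m$. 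Your phrase ``after shrinking $\sigma$ forces $z\in D(z_k,\sigma\tau_m(z_k))$'' does not repair this: both the separation radius and the covering radius scale linearly in $\sigma$, so shrinking $\sigma$ changes nothing, and the comparability constant never drops to $1$---even for a smooth radius function one has $C=1+O(\sigma)>1$, and for the $\tau_m$ literally defined in the paper $C$ is bounded away from $1$, because $\tau_m$ jumps across the circle $|(m^2-m)z|=1$ (the outer boundary value is $|m^2-m|^{(m-3)/2}$, not $1$, so your claim that the two pieces ``match continuously'' is false; this same jump, when it exceeds the factor $2$, also undermines your ``$\tau_m(z)\leq 2\tau_m(z_j)$ for $\sigma$ small'' step in (iii), which is why one should work with a continuous regularization such as $\tau_m(z)\simeq(1+|z|)^{(2-m)/2}$). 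What your argument actually proves is the lemma with (ii) weakened to $\CC=\bigcup_j D(z_j,C\sigma\tau_m(z_j))$---a statement that in fact suffices for every application in this paper---but not the statement as written, with the same $\sigma$ in (i) and (ii). Obtaining the latter requires a finer selection (for instance, a greedy choice of uncovered points of minimal modulus resolves the asymmetry when $\tau_m$ is radially non-increasing, i.e.\ $m\geq 2$, but not in the range $m\leq 1$ relevant here), or else one falls back on the cited constructions of \cite{Olivia,OVL}, as the paper does.
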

 \begin{lemma}\label{lem5}
Let $R$ be a sufficiently large number and  $\eta(R)$ be as before. If $(z_k)$  is the  covering sequence from Lemma~\ref{lem4}, then  the function
\begin{align*}
F= \sum_{z_k: |z_k| > \eta(R)} a_k \frac{f_{(z_k, R)}}{\tau_m^{\frac{2}{p}}(z_k)}
\end{align*} belongs to $\mathcal{F}_{(m, p)}$ for every sequence $(a_k)$ in $\ell^p$ and also $\|F\|_{(m,p)} \lesssim \|(a_k)\|_{\ell^p}$.
 \end{lemma}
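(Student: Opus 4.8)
The plan is to bound $\|F\|_{(m,p)}^p=\int_\CC |F(z)|^p e^{-p|z|^m}\,dA(z)$ directly, treating the two ranges $0<p\le 1$ and $1<p<\infty$ separately. Throughout I abbreviate $g_k(z)=|f_{(z_k,R)}(z)|\,e^{-|z|^m}$ and use the pointwise bound $|F(z)|\,e^{-|z|^m}\le \sum_{k}|a_k|\,g_k(z)\,\tau_m^{-2/p}(z_k)$, the sum running over those $z_k$ with $|z_k|>\eta(R)$. The two facts I intend to lean on are the norm estimate \eqref{test}, which in the form $\int_\CC g_k(z)^s\,dA(z)=\|f_{(z_k,R)}\|_{(m,s)}^s\simeq \tau_m^2(z_k)$ is available for every $0<s<\infty$, and the decay estimate \eqref{test00}.

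For $0<p\le1$ the argument is immediate. The subadditivity inequality $\big(\sum_k t_k\big)^p\le\sum_k t_k^p$ for nonnegative $t_k$ gives $|F(z)|^p e^{-p|z|^m}\le \sum_k |a_k|^p\,\tau_m^{-2}(z_k)\,g_k(z)^p$, and integrating term by term while invoking \eqref{test} with $s=p$ collapses the right-hand side to $\sum_k |a_k|^p$. Hence $\|F\|_{(m,p)}^p\lesssim\|(a_k)\|_{\ell^p}^p$ in this range.

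The genuine work is the range $1<p<\infty$. Here I would split $g_k=g_k^{1/p}\cdot g_k^{1/p'}$ (with $p'$ conjugate to $p$) and apply H\"older's inequality to the defining sum, obtaining the pointwise estimate
\begin{align*}
|F(z)|^p e^{-p|z|^m}\le\Big(\sum_k\frac{|a_k|^p\,g_k(z)}{\tau_m^2(z_k)}\Big)\Big(\sum_k g_k(z)\Big)^{p-1}.
\end{align*}
Granting the uniform bound $\sum_k g_k(z)\lesssim 1$, the second factor is harmless, and integration of the first factor together with \eqref{test} used now with $s=1$ (so that $\int_\CC g_k\,dA\simeq\tau_m^2(z_k)$) again telescopes the estimate to $\|F\|_{(m,p)}^p\lesssim\sum_k|a_k|^p$. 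The same bound simultaneously shows that the series defining $F$ converges in $\mathcal{F}_{(m,p)}$, so that $F$ is a well-defined element of the space.

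Everything therefore rests on the uniform summability $\sum_{k:\,|z_k|>\eta(R)} g_k(z)\lesssim 1$, and this is the step I expect to be the main obstacle. I would start from \eqref{test00} in the form $g_k(z)\le C\min\{1,(\min\{\tau_m(z_k),\tau_m(z)\}/|z-z_k|)^{R^2/2}\}$ and split the indices according to the value of the inner minimum. For the $z_k$ with $\tau_m(z_k)\gtrsim\tau_m(z)$, where the minimum is $\simeq\tau_m(z)$, I would group the points into the dyadic annuli $\{w:2^{j}\tau_m(z)\le|w-z|<2^{j+1}\tau_m(z)\}$; the separation property (i) of Lemma~\ref{lem4} together with the slow variation of $\tau_m$ lets at most a constant multiple of $2^{2j}$ of the $z_k$ lie in the $j$-th annulus, while \eqref{test00} bounds each of their $g_k(z)$ by $C\,2^{-jR^2/2}$, so the total is dominated by the geometric series $\sum_j 2^{j(2-R^2/2)}$, which converges once $R$ is large enough that $R^2>4$. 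For the remaining $z_k$, where $\tau_m(z_k)$ is the smaller quantity, these points necessarily lie far from $z$ when $|z|$ is large, and the minimum supplies the small factor $\tau_m(z_k)$; this forces each $g_k(z)$ to carry a large negative power of $|z|$, so that summing the at most polynomially many such contributions and taking $R$ large renders this part bounded uniformly in $z$ as well. The delicate bookkeeping is twofold: verifying that $\tau_m$ is comparable across each annulus so that the area count is legitimate, and confirming that the two regimes combine to a constant independent of $z$. Once this uniform summability is secured, the H\"older reduction above completes the proof.
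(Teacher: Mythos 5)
Your proposal is correct and follows essentially the same route as the paper, whose entire ``proof'' of Lemma~\ref{lem5} is a pointer to Proposition~1 of \cite{JPP} (or Proposition~9 of \cite{Olivia}): those propositions are proved precisely by your dichotomy---subadditivity for $0<p\le 1$, and for $p>1$ H\"older's inequality together with the uniform bound $\sum_k |f_{(z_k,R)}(z)|e^{-|z|^m}\lesssim 1$, obtained by the same separation/annulus counting that you outline. The one step you leave as a sketch, the regime $\tau_m(z_k)\ll \tau_m(z)$, closes exactly as you predict: for the relevant weights such points satisfy $|z-z_k|\gtrsim |z|$, so each term of \eqref{test00} is $O\big(|z|^{-mR^2/4}\big)$ while the separated points (which carry $\tau_m\gtrsim 1$) lying in $D(0,C|z|)$ number at most $O(|z|^2)$, making this part $O\big(|z|^{2-mR^2/4}\big)\lesssim 1$ once $R^2>8/m$.
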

 The proof the Lemma follows from a simple variant of the proof of Proposition~9 in \cite{Olivia} or Proposition~1 in \cite{JPP}.
\section{Proof of the Main results}
\subsection{Proof of Theorem~\ref{thm1}-Part (i)}
 Let us first prove the necessity of the condition  in part (i),  and assume  that  $D:\mathcal{F}_{(m,p)} \to \mathcal{F}_{(m,q)}$ is bounded. Then,   making use of the estimates in  \eqref{test0},  \eqref{test}, \eqref{test00} and \eqref{pointwise}, we have
\begin{align*}
 \|D\|^q \gtrsim \tau_m^{-\frac{2q}{p}}(w) \|Df_{(w, R)}\|_{(m,q)}^q= \tau_m^{-\frac{2q}{p}}(w)\int_{\CC}|f'_{(w, R)}(z)|^q e^{-q|z|^m} dA(z)\quad \quad \quad \quad \quad \quad \quad \quad  \nonumber\\
\geq \tau_m^{-\frac{2q}{p}}(w) \int_{D(w, \delta \tau_m(w))} \frac{ |f'_{(w, R)}(z)|^q}{ e^{q|z|^m}} dA(z)\gtrsim \tau_m^{2-\frac{2q}{p}}(w)  \frac{ |f'_{(w, R)}(w)|^q }{ e^{q|w|^m}}\quad \quad \quad \quad \quad \quad \quad \quad \quad \quad \quad \quad \nonumber\\
\simeq m^q\tau_m^{2-\frac{2q}{p}}(w) |w|^{q(m-1)}\quad \quad \quad \quad \quad \quad \quad \quad\quad \quad \quad \quad
\end{align*} for all $w\in \CC$. It follows that
\begin{align}
\label{normD}
 \|D\| \gtrsim \begin{cases}
|m^{2+p}-m^{1+p}|^{\frac{1}{p}}\sup_{w\in \CC} \big(1+|w|\big)^{(m-1)+\frac{(q-p)(m-2)}{qp}}, & m\neq1.\\
 1,  & m=1
 \end{cases}
\end{align} which holds only if $ pq(m-1)+(q-p)(m-2) \leq 0$ as asserted,  and  it also gives  a one sided estimate for the norm of $D$.

  We now turn to the proof of the sufficiency of the condition in part (i).  We use the covering sequences  approach along with Lemma~\ref{lem4},  where the original  idea goes back to \cite{OVL}.  Applying \eqref{paley} and \eqref{pointwise},  we  estimate
 \begin{align}
  \|Df\|_{(m,q)}^q = \int_{\CC} |f'(z)|^qe^{-q|z|^m} dA(z)\leq \sum_{j} \int_{D(z_j, \sigma\tau_m(z_j))} |f'(z)|^q e^{-q|z|^m}dA(z)\nonumber\\
 \lesssim \sum_j  \int_{D(z_j, \sigma\tau_m(z_j))}\bigg( \frac{1}{\tau_m^2(z)} \int_{D(z, \sigma\tau_m(z))} |f'(w)|^pe^{-p|w|^m} dA(w)\bigg)^{\frac{q}{p}} dA(z)=: S \nonumber
 \end{align}
 Now for each point $z\in D(w, \sigma\tau_m(w))$, observe  that  $1+|z| \simeq 1+|w|$. Taking this into account,  we further estimate
 \begin{align}
S \simeq \sum_j  \int_{D(z_j, \sigma\tau_m(z_j))}\bigg( \frac{m^p(1+|z|)^{p(m-1)}}{\tau_m^2(z)} \int_{D(z, \sigma\tau_m(z))} \frac{|f'(w)|^pe^{-p|w|^m}}{m^p(1+|w|)^{p(m-1)}} dA(w)\bigg)^{\frac{q}{p}} dA(z)\nonumber\\
 \lesssim \sum_j  \bigg( \int_{D(z_j, 3\sigma\tau_m(z_j))} \frac{|f'(w)|^pe^{-p|w|^m}}{m^p(1+|w|)^{p(m-1)}} dA(w)\bigg)^{\frac{q}{p}} \int_{D(z_j, \sigma\tau_m(z_j))} \frac{m^q(1+|z|)^{q(m-1)}}{\tau_m^{\frac{2q}{p}}(z)} dA(z)\nonumber
  \end{align}
  Since $q\geq p,$ applying Minkowski inequality and the finite multiplicity $N_{\max}$ of the covering sequence $D(z_j, 3\sigma\tau_m(z_j))$,  we obtain
\begin{align*}
\sum_j  \bigg( \int_{D(z_j, 3\sigma\tau_m(z_j))} \frac{|f'(w)|^pe^{-p|w|^m}}{m^p(1+|w|)^{p(m-1)}} dA(w)\bigg)^{\frac{q}{p}} \int_{D(z_j, \sigma\tau_m(z_j))} \frac{m^q(1+|z|)^{q(m-1)}}{\tau^{\frac{2q}{p}}_m(z)} dA(z)\quad \nonumber\\
\leq \bigg( \sum_j \int_{D(z_j, 3\sigma\tau_m(z_j))} \frac{|f'(w)|^pe^{-p|w|^m}}{m^p(1+|w|)^{p(m-1)}} dA(w)\bigg)^{\frac{q}{p}} \int_{D(z_j, \sigma\tau_m(z_j))} \frac{m^q(1+|z|)^{q(m-1)}}{\tau_m^{\frac{2q}{p}}(z)} dA(z)\nonumber\\
 \lesssim \|f\|_{(m,p)}^q \sup_{w\in \CC} \int_{D(w, \sigma\tau_m(w))} \frac{m^q(1+|z|)^{q(m-1)}}{\tau_m^{\frac{2q}{p}}(z)} dA(z)\quad \quad \quad \quad \quad \quad \quad \quad\nonumber\\
 \lesssim  \|f\|_{(m,p)}^q  \sup_{w\in \CC} \frac{m^q(1+|w|)^{q(m-1)} \tau_m^2(w)}{\tau_m^{\frac{2q}{p}}(w)}\quad \quad \quad \quad \quad \quad \quad \quad\nonumber\\
 \simeq \|f\|_{(m,p)}^q |m^{2+p}-m^{1+p}|^{\frac{q}{p}} \sup_{w\in \CC} (1+|w|)^{q(m-1)+\frac{q-p}{p}(m-2)} \quad \quad \quad \quad \quad \quad \quad \quad
 \end{align*} from which the sufficiency of the condition and
the reverse side of the estimate in \eqref{normD} follow. Thus we estimate the norm by
\begin{align*}
 \|D\| \simeq \begin{cases}
|m^{2+p}-m^{1+p}|^{\frac{1}{p}}\sup_{w\in \CC} \big(1+|w|\big)^{(m-1)+\frac{(q-p)(m-2)}{qp}}, & m\neq1.\\
 1,  & m=1
 \end{cases}
\end{align*}
To prove the compactness, we first assume that  the condition $m<2-\frac{pq}{pq+q-p}$ holds. Then for  each positive $\epsilon$,  there exists $N_1$ such that
 \begin{align}
|m^{2+p}-m^{1+p}|^{\frac{1}{p}} \sup_{|w|> N_1}
 (1+|w|)^{q(m-1)+ \frac{(q-p)(m-2)}{qp} } <\epsilon.
 \label{partly}
 \end{align}
 Next, we let   $f_n$ to  be a uniformly bounded sequence of functions in $\mathcal{F}_{(m,p)}$ that converges uniformly to zero on   compact subsets of  $\CC$.  Then applying   \eqref{paley} and  arguing in the same way as in the  series of estimations made  above, and invoking eventually \eqref{partly} it follows that
  \begin{align*}
    \|Df_n\|_{(m,q)}^q \lesssim  \int_{|z|\leq N_1}\frac{|f_n'(z)|^q}{ e^{q|z|^m}} dA(z)
    + \sum_{|z_j|>N-1}\int_{D(z_j,\sigma \tau_m(z_j))} \frac{|f_n'(z)|^q}{ e^{q|z|^m}} dA(z)\quad \quad \quad \quad \quad \quad \quad \quad \quad \quad \quad \quad\quad \quad \quad \quad\nonumber\\
       \lesssim \sup_{|w|\leq N_1} |f_n(w)|^q + \quad \quad \quad \quad \quad \quad \quad \quad \quad \quad \quad \quad \quad \quad \quad \quad \quad \quad \quad\quad \quad \quad \quad\quad \quad \quad \quad\nonumber\\
     + \sum_{|z_j|>N-1}\int_{D(z_j,\sigma \tau_m(z_j))} \bigg( \frac{m^p(1+|z|)^{p(m-1)}}{\tau_m^2(z)} \int_{D(z, \sigma\tau_m(z))} \frac{|f_n'(w)|^pe^{-p|w|^m}}{m^p(1+|w|)^{p(m-1)}} dA(w) \bigg)^{\frac{q}{p}} dA(z) \quad \quad \quad \quad \quad \quad \quad \quad \quad \quad \quad \quad\quad \quad \quad \quad\nonumber\\
   \lesssim \sup_{|w|\leq N_1} |f_n(w)|^q  + \|f_n\|_{(m,q)}^q \sup_{|w|>N_1} \frac{m^q(1+|w|)^{q(m-1)} \tau_m^2(w)}{\tau_m^{\frac{2q}{p}}(w)}\quad \quad \quad \quad \quad \quad \quad \quad \quad \quad \quad \quad \quad \quad \quad \quad \quad\nonumber\\
      \lesssim \sup_{|w|\leq N_1} |f_n(w)|^q+ |m^{2+p}-m^{1+p}|^{\frac{1}{p}} \sup_{|w|>N_1} (1+|w|)^{q(m-1)+ \frac{q-p}{p}(m-2)}
   \lesssim \epsilon \  \ \text{as}\ \ n\to \infty.\quad \quad \quad \quad \quad \quad \quad \quad \quad \quad \quad \quad \quad \quad \quad \quad
 \end{align*}
  Conversely, assume that $D$ is compact, and observe that the  normalized  sequence  $ f^*_{(w,R)}= f_{(w,R)}/\|f_{(w,R)}\|_{(m,p)}$ converges to zero as $|w|  \to \infty,$ and the convergence is uniform on compact subset of $\CC.$ Then  applying \eqref{pointwise} and \eqref{test0}, we find
 \begin{align*}
 \frac{|w|^{q(m-1)}}{\tau_m^{2\frac{q-p}{p}}(w)}\simeq
  (1+|w|)^{q(m-1)}\tau_m^{2\frac{p-q}{p}+\frac{2q}{p}}(w)e^{-q|w|^m} |f^*_{(w, \eta(R))}(w)|^q\nonumber\\
  \lesssim  \int_{D(w, \sigma\tau_m(w))} (1+|z|)^{q(m-1)} |f^*_{(w, \eta(R))}(z)|^qe^{-q|z|^m} dA(z)\nonumber\\
\lesssim   \|Df^*_{(w, \eta(R))}\|_{(m,q)}^q  \to 0, \ \ \text{as} \ \ |w| \to \infty.
   \end{align*}
  We note in passing that in particular when $p= q$ the necessary of the conditions in part (i) could  be also established   using the sequence of the polynomials $(z^n)$ as test functions. Such polynomials belong to the spaces $\mathcal{F}_{(m,p)}$ for all $p$.  Because arguing with polar coordinates and subsequently substitution,  we could easily observe that
   \begin{align*}
  \|z^n\|_{(m, p)}^p= \int_{\CC} |z^n|^p e^{-p|z|^m} dA(z)= 2\pi\int_{0}^\infty r^{pn+1} e^{-pr^m} dr\quad \quad \nonumber\\
= 2\pi  p^{-\frac{pn+2}{m}} \int_{0}^\infty  t^{\frac{pn+2}{m}-1} e^{-t} dt
=  2\pi  p^{-\frac{pn+2}{m}}\Gamma \Big( \frac{pn+2}{m} \Big)<\infty.
\end{align*}

  For the case $p<q$, an application of such  polynomials  only gives the condition
  \begin{align*}
  m\leq 2-\frac{2(pq-3(q-p))}{p-q+ 2pq}
  \end{align*} which is weaker than the condition in the result  since
  \begin{align*}
  \frac{2(pq-3(q-p))}{p-q+ 2pq} < \frac{pq}{pq+q-p},\  \text{for}\ \  p<q.
  \end{align*}
\subsection{Proof of Theorem~\ref{thm1}-Part (ii)} We assume $0<q<p<\infty$.  As  $(b)$ obviously implies (a), we plan to show  (a) implies (c) and (c) implies (b).
 For the first, we follow this classical technique  where the original idea goes back to Luecking \cite{DL}.  Let $0<q<\infty$ and $R$  be a sufficiently large number   and $(z_k)$ be the covering sequence as in Lemma~\ref{lem4}. Then by  Lemma~\ref{lem5},
\begin{align*}
F= \sum_{z_k:|z_k|\geq\eta(R)} a_k  \frac{f_{(z_k,R)}}{\tau_m^{\frac{2}{p}}(z_k)}
\end{align*} belongs to $\mathcal{F}_{(m,p)}$ for every $\ell^p$  sequence $(a_k)$  with norm  estimate
$
\|F\|_{(m,p)} \lesssim \|(a_k)\|_{\ell^p}.$
If $(r_k(t))_k$  is the Radmecher sequence of function on $[0,1]$ chosen as in \cite{DL}, then the sequence $(a_kr_k(t))$ also
belongs to $\ell^p$ with $\|(a_kr_k(t))\|_{\ell^p}= \|(a_k)\|_{\ell^p}$ for all $t$. This implies that the function
\begin{align*}
F_t= \sum_{z_k:|z_k|\geq\eta(R)} a_k r_k(t) \frac{f_{(z_k,R)}}{\tau_m^{\frac{2}{p}}(z_k)}
\end{align*} belongs to $\mathcal{F}_{(m,p)}$  with norm estimate
$\|F_t\|_{(m,p)} \lesssim \|(a_k)\|_{\ell^p}. $ Then, an  application of Khinchine's inequality \cite{DL} yields
\begin{align}
\label{Khinchine}
\Bigg(\sum_{z_k:|z_k|\geq\eta(R)} |a_k|^{2} \frac{|f_{(z_k,R)}'(z)|^2}{\tau_m^{\frac{4}{p}}(z_k)}\Bigg)^{\frac{q}{2}}\lesssim \int_{0}^1\bigg| \sum_{z_k:|z_k|\geq\eta(R)} a_k r_k(t) \frac{f'_{(z_k,R)}(z)}{\tau_m^{\frac{2}{p}}(z_k)}\bigg|^q dt.
\end{align}
Making use of \eqref{Khinchine}, and subsequently Fubini's theorem, we have
 \begin{align*}
\int_{\CC}\Bigg(\sum_{z_k:|z_k|\geq\eta(R)} |a_k|^{2} \frac{|f_{(z_k,R)}'(z)|^2}{\tau_m^{\frac{4}{p}}(z_k)}\Bigg)^{\frac{q}{2}}e^{-q|z|^m}dA(z)\quad \quad \quad \quad \quad \quad \quad  \quad \quad \quad  \quad \quad \quad  \quad \quad \quad \nonumber\\
\lesssim \int_{\CC} \int_{0}^1\bigg| \sum_{z_k:|z_k|\geq\eta(R)} a_k r_k(t) \frac{f'_{(z_k,R)}(z)}{\tau_m^{\frac{2}{p}}(z_k)}\bigg|^q dt e^{-q|z|^m}dA(z)\nonumber\\
=  \int_{0}^1 \int_{\CC}\bigg| \sum_{z_k:|z_k|\geq\eta(R)} a_k r_k(t) \frac{f'_{(z_k,R)}(z)}{\tau_m^{\frac{2}{p}}(z_k)}\bigg|^q e^{-q|z|^m}dA(z) dt\simeq \int_{0}^1\|D F_t\|_{\mathcal{F}_{(m,q)}}^q dt\lesssim \|(a_k)\|_{\ell^p}^q.
\end{align*}
Now arguing with this, the covering lemma,  and \eqref{test0} leads to the series of estimates
\begin{align*}
\sum_{z_k:|z_k|\geq\eta(R)}\frac{ |a_k|^{q}}{\tau_m^{\frac{2q}{p}}(z_k)}\int_{D(z_k,3\sigma\tau_m(z_k))}(1+ |z|)^{q(m-1)} dA(z) \quad \quad \quad \quad \quad \quad \quad \quad \quad \quad \quad \quad  \quad  \nonumber\\
\simeq  \sum_{z_k:|z_k|\geq\eta(R)}\frac{ |a_k|^q}{\tau_m^{\frac{2q}{p}}(z_k)}\int_{D(z_k, 3\sigma\tau_m(z_k))} |f'_{(z_k, R)}(z) |^qe^{-q|z|^m} dA(z)\nonumber\\
 \simeq \int_{\CC} \sum_{z_k:|z_k|\geq\eta(R)}\frac{ |a_k|^{q}}{\tau_m^{\frac{2q}{p}}(z_k)}\chi_{D(z_k, 3\sigma\tau_m(z_k))}(z)| f'_{(z_k, R)}(z)|^q e^{-q|z|^m}dA(z)\nonumber\\
\lesssim \max\{1, N_{\max}^{1-q/2}\} \int_{\CC}\Bigg(\sum_{z_k:|z_k|\geq\eta(R)} |a_k|^{2} \frac{|f_{(z_k,R)}'(z)|^2}{\tau_m^{\frac{4}{p}}(z_k)}\Bigg)^{\frac{q}{2}}e^{-q|z|^m}dA(z)
\lesssim \|(a_k)\|_{\ell^p}^q.
\end{align*}
Applying duality between the spaces $\ell^{p/q}$ and $\ell^{p/(p-q)}$, we again get
\begin{align*}
\sum_{z_k:|z_k|\geq\eta(R)}\Bigg(\frac{1}{\tau_m^2(z_k)}\int_{D(z_k, 3\sigma\tau_m(z_k))}(1+ |z|)^{q(m-1)} dA(z)\Bigg)^{\frac{p}{p-q}}\tau_m^2(z_k)\quad \quad \quad \quad   \nonumber\\
\simeq \sum_{z_k:|z_k|\geq\eta(R)} (1+|z_k|)^{\frac{qp(m-1)}{p-q}} \tau_m^2(z_k) < \infty. \quad \quad \quad \quad \quad \quad \quad  \quad
\end{align*}
On the other hand, we can find a positive number $r\geq \eta(R)$ such that whenever a point $z_k$ of the covering sequence $(z_j)$ belongs to
$\{|z|<\eta(R)\}$, then $D(z_k, \sigma\tau_m(z_k)) $ belongs to $\{|z|<\eta(R)\}$. In view of this we estimate
\begin{align*}
\int_{|w|\geq r} (1+|w|)^{\frac{qp(m-1)}{p-q}}dA(w)
\leq \sum_{|z_k|\geq\eta(R)}\int_{D(z_k, \sigma\tau_m(z_k))} (1+|w|)^{\frac{qp(m-1)}{p-q}}dA(w)\quad \quad \quad \nonumber\\
\lesssim \sum_{|z_k|\geq\eta(R)}\int_{D(z_k, \sigma\tau_m(z_k))} (1+|w|)^{\frac{qp(m-1)}{p-q}}\tau^2_m(z_k)dA(w)\nonumber\\
\simeq \sum_{|z_k|\geq\eta(R)}(1+|z_k|)^{\frac{qp(m-1)}{p-q}} \tau^2_m(z_k)  < \infty.
\end{align*}
It also  follows that
\begin{align*}
\int_{|w|< r} \Bigg(\frac{1}{\tau_m^2(w)}\int_{D(w, 3\delta\tau_m(w))} (1+|z|)^{q(m-1)} dA(z)\Bigg)^{\frac{p}{p-q}}dA(w)< \infty
\end{align*}
Taking into account  the range of   the above estimates we find
\begin{align*}
 \int_{\CC} (1+|z|)^{\frac{qp}{p-q}(m-1)}dA(w)= \int_{|z|\leq r} (1+|w|)^{\frac{qp}{p-q}(m-1)}dA(w)\quad   \nonumber\\
 + \int_{|w|>r} (1+|w|)^{\frac{qp}{p-q}(m-1)}dA(w)<\infty,
\end{align*}  which holds only if
$\frac{qp}{p-q}(m-1) <-2$ as claimed.

To prove (c) implies (b), we argue as follows. Let   $f_n$ to  be a uniformly bounded sequence of functions in $\mathcal{F}_{(m,p)}$ that converges uniformly to zero on   compact subsets of  $\CC$, and by the given condition, for each $\epsilon >0,$ there exists a positive number $r_1$ such that
 \begin{align}
 \label{less}
 \int_{|z|>r_1} (1+|z|)^{\frac{qp}{p-q}(m-1)} dA(z) < \epsilon.
 \end{align}
 Since $p/q >1, $   applying H\"older's inequality, \eqref{paley} and \eqref{less}, we have
  \begin{align*}
  \int_{|z|>r_1}|f_n'(z)|^q e^{-q|z|^m}dA(z)= \int_{|z|>r_1}\bigg( \frac{|f_n'(z)|^qe^{-q|z|^m}}{(1+|z|)^{q(m-1)}}\bigg) (1+|z|)^{q(m-1)}dA(z)\nonumber\\
  \lesssim \|f\|_{(m,p)}^q \bigg(\int_{|z|>r_1} (1+|z|)^{\frac{qp}{p-q}(m-1)}dA(z) \bigg)^{\frac{p-q}{p}}\nonumber\\
  \lesssim \|f\|_{(m,p)}^q \epsilon \lesssim \epsilon. \quad \quad \quad \quad \quad \quad \quad  \quad
  \end{align*}
  On the other hand when $|z|\leq r_1$, then
  \begin{align*}
  \int_{|z|\leq r_1} |f_n'(z)|^q e^{-q|z|^m}dA(z)\lesssim \int_{|z|\leq r_1} |f_n(z)|^q(1+|z|)^q  e^{-q|z|^m}dA(z)\nonumber\\
   \lesssim \sup_{|z|\leq r_1}|f_n(z)|^q  \int_{|z|\leq r_1} (1+|z|)^q  e^{-q|z|^m}dA(z)\nonumber\\
   \lesssim \sup_{|z|\leq r_1}|f_n(z)|^q  \to 0 \ \ \text{as}\ \  n\to \infty
    \end{align*} from which our claim follows.
\subsection{Proof of Theorem~\ref{thm2}}
\emph{Part (i).}  Let us now turn to the Schatten $\mathcal{S}_p(\mathcal{F}_{(m,2)})$ membership of  $D$.  We recall that a compact  $D$ belongs
to the Schatten $\mathcal{S}_p(\mathcal{F}_{(m,2)})$ class if and only if the sequence of the eigenvalues of  the positive operator $(D^*D)^{1/2}$  is  $\ell^p$ summable.  It suffices to prove the statement for $p\geq 1$. The remaining case for $0<p< 1$ follows by the monotonicity property $\mathcal{S}_p(\mathcal{F}_{(m,2)})\supseteq \mathcal{S}_q(\mathcal{F}_{(m,2)})$ for  $p\leq q$. \\

If  $p>1,$ then   $D$ belongs to $\mathcal{S}_p(\mathcal{F}_{(m,2)})$ if and only if
\begin{align}
\label{sum}
 \sum_{n=0}^\infty |\langle De_n, e_n\rangle|^p <\infty,
 \end{align}
  for any orthonormal basis $(e_n)$ of  $\mathcal{F}_{(m,2)}$ (see \cite[Theorem 1.27]{KZH1}).
Note that the sequence of the  polynomials $(z^n/\|z^n\|_{(m,2)})$  constitutes an orthonormal basis to $\mathcal{F}_{(m,2)}$.  Since
$$De_n= n\frac{z^{n-1}}{\|z^n\|_{(m,2)}}=\frac{n \|z^{n-1}\|_{(m,2)}}{\|z^n\|_{(m,2)}} e_{n-1}, $$
we obtain
\begin{align*}
\langle De_n, e_n\rangle= \frac{n \|z^{n-1}\|_{(m,2)}}{\|z^n\|_{(m,2)}} \langle e_{n-1}, e_n\rangle=0
\end{align*} for all n, from which \eqref{sum} easily follows.\\

\emph{Part (ii).}
   Recall that the spectrum $\sigma (T)$ of   a bounded  operator $T$ is the set containing  all $\lambda \in \CC$ for which
   $\lambda I-T$  fails to be invertible, where $I$ is the identity operator. The complement of the spectrum is referred as    the resolvent set. \\
    A simple computation shows that the  function $f^*(z)= c e^{\lambda z}$ solves the differential equation $\lambda f= Df= f'$, where $c$ is a constant. Then we  analyze
\begin{align}
\label{new}
\|f^*\|_{(m,p)}^p=\int_{\CC} |ce^{\lambda z}|^p e^{-p|z|^m} dA(z)= |c|^p \int_{\CC} e^{p\Re(\lambda z)-p|z|^m} dA(z)
\end{align} depending  on the size of $m$.   Let us first assume that $m=1$.  Then,   the integral in \eqref{new} converges  for each $\lambda \in \CC$  such that  $ |\lambda| <1$.  This means that the function $f^*$ belongs to $ \mathcal{F}_{(m,p)}$, and can be chosen in such a way that $c\neq 0$. From this we deduce
 \begin{align}
\label{reverse}
\overline{\D}\subseteq \sigma(D)\ \ \text{or}\ \ \overline{\{\lambda \in \CC: e^{\lambda z}\in \mathcal{F}_{(m,p)}\}} \subseteq \sigma(D).
\end{align} To prove the  reverse  inclusion  in  \eqref{reverse}, observe that the  integral in \eqref{new} fails to converge  for each $|\lambda |\geq 1$ and $c\neq 0$. It means that  $\lambda I-D$ is injective whenever $|\lambda|\geq1.$  On the other hand, for such values of  $\lambda,$  a  simple computation again shows that the equation $ \lambda f-Df= h$ has a unique analytic  solution
\begin{align}
\label{explicit}
f(z)= R_{\lambda} h(z)= Ce^{\lambda z} - e^{\lambda z} \int_{0}^z  e^{-\lambda w} h(w) dA(w),
\end{align} where $R_\lambda$ is the resolvent operator of $D$ at point $\lambda$, $C= f(0)$ is a constant value.
We remain to show that the operator  $R_{\lambda}$  given by the explicit expression in \eqref{explicit} is  bounded on $\mathcal{F}_{(m, p)}.$  To this end, applying Lemma~\ref{lem2}, we have
\begin{align*}
\|R_{\lambda} h\|_{(m, p)}^p =  \bigg\| e^{\lambda z}\bigg( C- \int_{z_0}^z  e^{-\lambda w} h(w) dA(w)\bigg) \bigg\|_{(m, p)}^p \quad \quad \quad \quad \nonumber\\
\lesssim \int_{\CC} |e^{\lambda z}|^p e^{-p|z|} \bigg|\frac{d}{dz}  \int_{z_0}^z  e^{-\lambda w} h(w) dA(w)\bigg|^p dA(z)\nonumber\\
\lesssim \int_{\CC} |h(z)|^p e^{-p|z|}  dA(w) dA(z) = \|  h \|_{(m, p)}^p. \quad \quad \quad \nonumber
\end{align*}
We now turn to the case  $m<1$. For this,  part  (b) of our result forces $D$ to be a compact operator. Furthermore, we observe that  the integral in \eqref{new} converges only if $c=0$ and hence $f^*(z)= 0$,  which clarifies that  $D$ has no point spectrum. To this effect, $\sigma(D)= \{0\}$.
  
\end{document}